\newtheorem{thm}{Theorem}[section]
\newtheorem{cor}[thm]{Corollary}
\newtheorem{prop}[thm]{Proposition}
\newtheorem{fact}[thm]{Fact}
\theoremstyle{definition}
\newtheorem{defin}[thm]{Definition}
\newtheorem{rem}[thm]{Remark}
\numberwithin{equation}{section}
\newcommand{\N}{{\mathbb N}}
\newcommand{\R}{{\mathbb R}}
\newcommand{\T}{{\mathbb T}^2}
\newcommand{\Cinf}{{{\mathcal C}^\infty}}
\newcommand{\vers}{\longrightarrow}
\newcommand{\Leb}{\operatorname{Leb}}
\newcommand{\Sing}{\operatorname{Sing}}
\begin{document}

%%%%% To ease editing, for IMPAN journals add:

\baselineskip=17pt

%%%%%%%%%%%%%%%%

\title{On Physical Measures for Cherry Flows}

\author{Liviana Palmisano\\
Institute of Mathematics\\ 
Polish Academy of Sciences\\
00-956 Warszawa, Poland\\
E-mail: l.palmisano@impan.pl}

\date{}

\maketitle

%% Classification and key words; note that the 2010 classification is used:

\renewcommand{\thefootnote}{}

\footnote{2010 \emph{Mathematics Subject Classification}: Primary 37A99; Secondary 28D99.}

\footnote{\emph{Key words and phrases}: physical measures, Cherry flows, invariant measures.}

\renewcommand{\thefootnote}{\arabic{footnote}}
\setcounter{footnote}{0}

%%%%%%%%

\begin{abstract}
Studies of the physical measures for Cherry flows were initiated in \cite{SV}. 
While the non-positive divergence case was resolved, the positive divergence one still lacked the complete description. 
Some conjectures were put forward. In this paper we contribute in this direction. 
Namely, under mild technical assumptions we solve conjectures stated in \cite{SV} by providing a description of the 
physical measures for Cherry flows in the positive divergence case.
% In this paper the authors resolved the non-positive divergence case and partially the positive one putting forward some conjectures. Under some technical assumption we solve these conjectures and we provide a description of the physical measures for Cherry flows in the positive divergence case. 
% 
% Studies of the physical measures for Cherry flows were initiated in \cite{SV}. In this paper the authors resolved the non-positive divergence case. The positive divergence case was treated only partially and some conjectures were put forward. 
% 
% 
% 
%  and partially the positive one putting forward some conjectures. Under some technical assumption we solve these conjectures and we provide a description of the physical measures for Cherry flows in the positive divergence case.
% 
% 

 % 
 % Under mild technical assumptions we solve cojectures stated in \cite{SV} by providing a description of the physical measures for Cherry flows in the positive divergence case.
\end{abstract}

\section{Introduction}
One of the main goals of dynamical systems theory is to describe the typical behavior of orbits, especially when time goes to infinity, and understanding how this behavior is affected by small perturbations of the law that governs the system. 
\par
Such questions are especially difficult when the system is sensitive to initial conditions; that is, when a small change in the initial state results in a large variation in the long term behavior of the orbits. One way to address this problem is using the so-called physical measures. These are probability measures of a particular interest as they describe the statistical properties of a large set of orbits. 
\par
In general physical measures are still poorly understood.  Even their existence has been established only for a narrow class of systems.  
In this paper we make a contribution in this area by studying the physical measures for Cherry flows. 
\par
We recall the classical construction of a Cherry flow given in \cite{Cherry}. 
It is a $\Cinf$ flow on the two-dimensional torus without closed orbits and with two singularities, a sink and a saddle. 
In this case it is relatively easy to check that the only physical measure is the Dirac delta at the sink. 
The inverted flow, still called a Cherry flow, is a $\Cinf$ flow on the torus $\T$, without closed orbits and with two singularities, 
a saddle point and a repelling point, both hyperbolic, see Figure \ref{Cherrypic}. Describing the physical measures in this framework, which is the topic of this paper, 
is much more difficult and interesting. 
\par
Our results answer some open questions of \cite{SV}.
\par
Before we can state them formally, it is necessary to recall some basic definitions. 
\begin{figure}\label{Cherrypic}
\centering
\includegraphics[width=109mm]{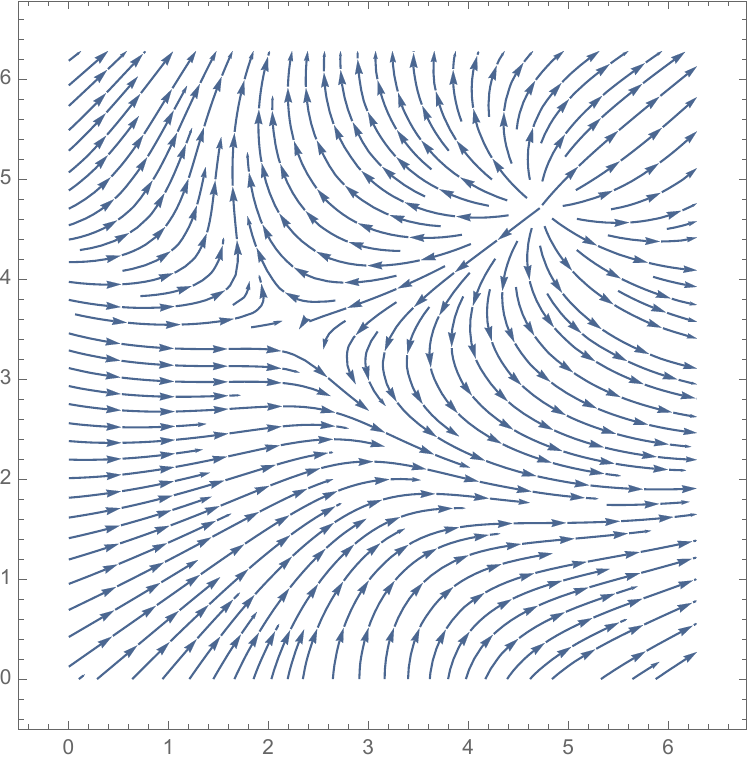}
\caption{Cherry flow}
\end{figure}

\subsection{Basic Definitions and General Remarks}
\paragraph{Physical Measures.}
Let $\phi$ be a continuous flow on a compact manifold $M$. A probability measure $\nu$ is {\it invariant} under the flow if $\nu(\phi_t(A))=\nu(A)$, for all $t\in\mathbb R$ and for any measurable set $A\subset M$. 

\begin{defin}\label{cela}
Let $t > 0$, we define the following family of probability measures $m_t(z)$, $z\in M$ by:
\begin{equation*}
\int_{M}\alpha dm_t(z)=\frac{1}{t}\int_0^t\alpha(\phi_s(z))ds,
\end{equation*} 
for each continuous function $\alpha: M \vers \mathbb R$.
\end{defin}  
\begin{defin}\label{cela1}
Let $\nu$ be an invariant probability measure. The basin of attraction $B(\nu) = B^\phi(\nu)$ of $\nu$ is the set of $z \in M$ such that:
\[
	\lim_{t\to\infty}m_t(z)=\nu \, \, \, (\text{for the weak-}\star\text{topology}).
\]
The measure $\nu$ is said to be physical if its basin of attraction has strictly positive Lebesgue measure.  
\end{defin} 

\paragraph{Cherry Flows.}
In the following we provide some definitions and properties concerning Cherry flows. We will state them in a compact form. For more details the reader can refer to \cite{ABZ}, \cite{NZE}, \cite{MvSdMM}, \cite{my}, \cite{mythesis}. 

\begin{defin}\label{Cherinvdef}
A Cherry flow is a $\Cinf$ flow on the torus $\T$ without closed orbits and with two singularities, a saddle point and a repelling point, both hyperbolic.
\end{defin}

From now on for the rest of the paper $\phi$ will denote a Cherry flow as in Definition \ref{Cherinvdef}. 
Moreover $p_s$ will denote the saddle point of $\phi$ and $p_r$ its repelling point.
\begin{prop}\label{Poinsection}
Let $\phi$ be a Cherry flow and let $\Sing(\phi)$ be the set of the singularities of $\phi$. There exists a closed $\Cinf$ curve $C$, on $\T\setminus \Sing(\phi)$ with the following properties:
\begin{itemize}
\item $C$ is everywhere transversal to the flow;
\item $C$ is not contractable to a point.
\end{itemize}
\end{prop}
\begin{defin}
The closed curve $C$ constructed in Proposition \ref{Poinsection} is called a closed transversal.
\end{defin}
\begin{fact}
Every Cherry flow admits a closed transversal $C$. Notice that $T^2\setminus C$ is $C^{\infty}$-equivalent to an annulus $\mathbb S^1\times(0,1)$ and we can write $T^2\cong\mathbb S^1\times\left[0,1\right]/\sim$, where $\left(s,0\right)\sim\left(s,1\right)$. Consider $\phi$ as a flow on $T^2\cong\mathbb S^1\times\left[0,1\right]$ where we identify $\mathbb S^1\times\left\{0\right\}$ and $\mathbb S^1\times\left\{1\right\}$. After this change of coordinates, the resulting flow is a Cherry flow.
\end{fact}

Let now $g$ be the first return map of the flow $\phi$ to the closed transversal. 
The existence of $g$ is guaranteed by Theorem 2.6.1 in \cite{NZE} and by the absence of closed trajectories for Cherry flows.
Observe that $g$ is $\Cinf$ everywhere except at one point which belongs to the stable manifold of the saddle point and which we will assume to be zero (we identify $\mathbb S^1$ with $\left[-\frac{1}{2},\frac{1}{2}\right]_{-\frac{1}{2}\sim\frac{1}{2}}$). We denote by $a$ and $b$ respectively the left-sided limit and the right-sided limit of the orbit of the discontinuity point $0$ and by $U=(a,b)$.
\par
We consider now the flow $\varphi$ obtained by reversing the direction of $\phi$. The repelling point of $\phi$ becomes an attractive point for $\varphi$ which is then a Cherry flow like in Cherry's example (\cite{Cherry}). In this case, the first return map $f$ of $\varphi$ 
to the closed transversal is a circle endomorphism which is $\Cinf$ everywhere with the exception of the points $a$ and $b$ where it is continuous and it is constant on the interval $U=(a,b)$. Moreover, after a change of coordinates, on a half open neighborhood of these two points $f$ can be written as $x^{\frac{\lambda_1}{-\lambda_2}}$ where  $\lambda_1>0>\lambda_2$ are the eigenvalues of the saddle point $p_s$ of $\phi$. The respective formula for $g$ is clearly $x^{\frac{-\lambda_2}{\lambda_1}}$. 

\paragraph{Rotation Number.}
As $f$ is a monotone circle map, it has a rotation number; this number is the quantity which measures the rate at which an orbit winds around the circle. More precisely, if $F$ is a lifting of $f$ to the real line, the rotation number of $f$ is the limit 
\begin{displaymath}
\rho(f)=\lim_{n\to\infty}\frac{F^n(x)}{n} \textrm{ (mod $1$)}.
\end{displaymath}
This limit exists for every $x$ and its value is independent of $x$. 
\par
We can then define the rotation number of any flow $\varphi$ obtained by reversing the direction of a Cherry flow $\phi$ as follow:
\begin{defin}
The rotation number of $\varphi$ is the rotation number of its first return map to any closed transversal.
\end{defin}
\begin{fact}\label{rotCf}
It is easy to check that the rotation number $\rho$ of $\varphi$ does not depend on the choice of the closed transversal.
\end{fact}
Consequently the rotation number of any Cherry flow is defined:
\begin{defin}
Let $\phi$ be a Cherry flow as in Definition \ref{Cherinvdef}. The rotation number of $\phi$ is the rotation number of the flow $\varphi$ obtained by reversing the direction of $\phi$. 
\end{defin}

Since the considered flow does not have closed orbits, then $f$ has an irrational rotation number $\rho$ which admits an expansion as an infinite continued fraction
\begin{displaymath}
\rho=\frac{1}{a_1+\frac{1}{a_2+\frac{1}{\cdots}}},
\end{displaymath}
where $a_i$ are positive integers.

If we cut off the portion of the continued fraction beyond the $n$-th position, and write the resulting fraction in lowest terms as $\frac{p_n}{q_n}$ then the numbers $q_n$ for $n\geq 1$ satisfy the recurrence relation
\begin{equation}\label{q_n}
q_{n+1} = a_{n+1}q_n+q_{n-1};\textrm{  } q_0 = 1;\textrm{  } q_1 = a_1.
\end{equation}
The number $q_n$ is the number of times we have to iterate the rotation by $\rho$ in order that the orbit of any point makes its closest return so far to the point itself (see Chapter I, Sect. I in \cite{deMvS}).

\begin{defin}\label{boundedtype}
Let $\rho$ be an irrational number and let $(a_i)_{i\in\N}$ be the integers of the representation as an infinite continued fraction of $\rho$. We say that $\rho$ is of bounded type if and only if there exist a positive real number $M$ such that, for any $i$, $a_i < M$.
\end{defin}

\paragraph{First Return Time.}
\begin{defin}
Let $z$ be a point of the closed transversal. The first return time $\tau(z)$ for $\phi$ to $\mathbb S^1$ is the number of iteration of $g$ needed by $z$ to come back to $\mathbb S^1$ for the first time.
\end{defin}
\begin{fact}\label{lem:taulog}
The first return time $\tau(z)$ for $\phi$ to $\mathbb S^1$ has a logarithmic singularity in $0$. This means that for all $\epsilon > 0$, there exists a constant $C>0$ such that, for all $z$ in the interval $\left(-\epsilon,\epsilon\right)$, we have that $\frac{1}{C}\leq\frac{\tau(z)}{-\log|z|}\leq C$. In other word $\tau(z)$ is of order of $-\log|z|$.
\end{fact}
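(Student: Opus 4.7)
The plan is to decompose $\tau(z)$ into three pieces: a bounded entry time from $\mathbb{S}^1$ to a neighborhood of the saddle $p_s$, a transit time across that neighborhood that carries the logarithmic singularity, and a bounded exit time back to $\mathbb{S}^1$. The hyperbolicity of $p_s$ is what produces the $-\log|z|$ behavior, and everything else is bounded by compactness arguments away from the singularities.

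First I would work locally at $p_s$. Because $p_s$ is hyperbolic with eigenvalues $\lambda_1 > 0 > \lambda_2$, a standard Sternberg-type linearization for smooth flows provides coordinates $(u,v)$ on a small box $B = [-\delta,\delta]^2$ centered at $p_s$ in which the flow is conjugate to
\[
\phi_t(u_0,v_0) = \bigl(u_0 e^{\lambda_1 t},\, v_0 e^{\lambda_2 t}\bigr),
\]
with $W^s(p_s)\cap B = \{u=0\}$ and $W^u(p_s)\cap B = \{v=0\}$. Taking an entry section $\Sigma_{\mathrm{in}} := \{|u|\le\delta,\, v=\delta\}$ and an exit section $\Sigma_{\mathrm{out}} := \{u=\pm\delta,\, |v|\le\delta\}$, a direct integration shows that a point entering at $(u_0,\delta)$ with $u_0\neq 0$ reaches $\Sigma_{\mathrm{out}}$ at time
\[
T_B(u_0) = \frac{1}{\lambda_1}\,\log\frac{\delta}{|u_0|}.
\]

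Second I would analyse the orbit outside $B$. By construction the discontinuity $0\in\mathbb{S}^1$ of $g$ is the point where $W^s(p_s)$ first meets $\mathbb{S}^1$. For $z$ in a punctured neighbourhood $(-\epsilon,\epsilon)\setminus\{0\}$, the orbit arc from $z$ to its first hit with $\Sigma_{\mathrm{in}}$ stays uniformly away from both singularities, so the flow map $z\mapsto (u_0(z),\delta)$ is a smooth local diffeomorphism. In particular $|u_0(z)|\asymp |z|$ and the entry time is bounded by some constant $C_1$. Likewise, after exiting through $\Sigma_{\mathrm{out}}$, the orbit reaches $\mathbb{S}^1$ in an additional time bounded by some $C_2$, since it avoids the two singularities. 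Summing the three contributions gives
\[
\tau(z) = \frac{1}{\lambda_1}\log\frac{1}{|z|} + O(1)
\]
on $(-\epsilon,\epsilon)\setminus\{0\}$, which immediately yields the two-sided bound $\frac{1}{C}\le\frac{\tau(z)}{-\log|z|}\le C$ after shrinking $\epsilon$ if necessary.

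The main obstacle is the transit-time estimate across the hyperbolic box: one must either invoke a smooth linearization (Sternberg), or, in the presence of resonances between $\lambda_1$ and $\lambda_2$, argue directly in a polynomial normal form that the transit time still satisfies $\frac{1}{\lambda_1}\log(1/|u_0|) + O(1)$. Both approaches are classical (see for instance Palis--de Melo, Chapter~2), and it is the only step where hyperbolicity enters essentially; the remaining reductions to bounded entry and exit times are routine consequences of compactness and the fact that $\phi$ is $\Cinf$ off $\{p_s,p_r\}$.
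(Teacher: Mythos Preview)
The paper states this as a \textbf{Fact} without proof, so there is no argument in the text to compare against; your proposal supplies precisely the classical justification one would expect (passage time through a hyperbolic box plus bounded entry/exit legs), and it is correct.

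One small remark: your concern about resonances is well placed but can be sidestepped in dimension two, since Hartman's theorem gives a $C^1$ linearization of any hyperbolic planar saddle without non-resonance hypotheses, and $C^1$ regularity is enough to conclude both the transit-time formula $T_B(u_0)=\lambda_1^{-1}\log(\delta/|u_0|)$ and the comparability $|u_0(z)|\asymp|z|$. Also note that the statement is phrased ``for all $\epsilon>0$'' with $C=C(\epsilon)$; your argument yields the estimate for all sufficiently small $\epsilon$, and the extension to larger $\epsilon$ is immediate since on any compact arc bounded away from $0$ both $\tau$ and $-\log|z|$ are bounded above and below.
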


Since $f$ preserves the order and it does not have periodic points, by Poincar\'e's Theorem, there exists a continuous order preserving degree one function $h:\mathbb{S}^{1}\rightarrow\mathbb{S}^{1}$ such that $h\circ f=R_{\rho}\circ h$, where $R_{\rho}$ is the rotation by $\rho$.  
%Observe that the preimages of $h$ are points or open intervals of 
%the circle and that the circle can contain only finitely many disjoint open intervals. 
%In particular, the set of the preimages of $h$ has zero Lebesgue measure. 
Then the probability measure $\mu=h^*(\Leb)$  supported on the minimal set is well defined and it is the only invariant ergodic measure for $f$.
\par 

By Proposition $2$ in \cite{SSV} this measure $\mu$ can be extended to a invariant probability measure $\nu$ on the torus supported on the quasi-minimal set if $\int_{\mathbb S^1}\tau d\mu$ is convergent.
\begin{figure}\label{Cherrypic}
\centering
\includegraphics[width=109mm]{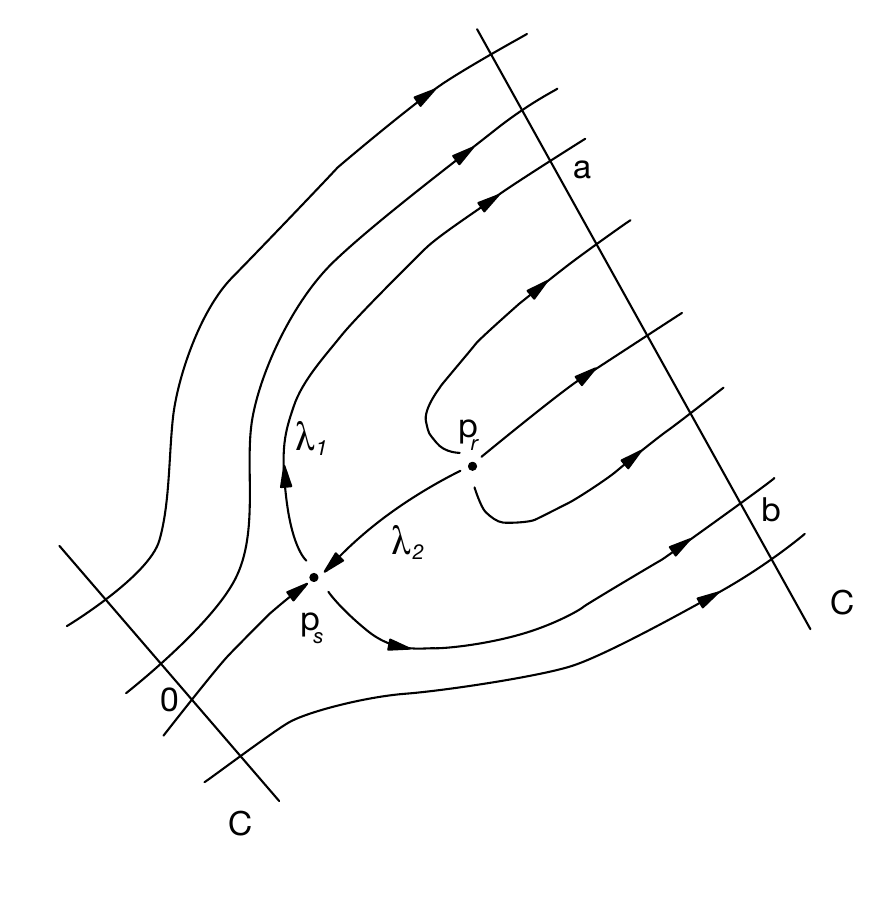}
\caption{Cherry flow}
\end{figure}
\subsection{Discussion and Statement of the Results}
In this paper we are interested in the physical measures for Cherry flows which are $\Cinf$ flows on the torus $\T$, 
without closed orbits and with two singularities, a saddle point $p_s$ and a repelling point $p_r$, both hyperbolic (see Figure \ref{Cherrypic}). 
\par 
This problem was studied firstly in \cite{SV} in which the authors gave a description of the physical measures for some class of Cherry flows. They discovered that this problem is related to the variation of the divergence of the flow at the saddle point.\footnote{
Observe that the divergence of a Cherry flow $\phi$ at the saddle point $p_s$ is exactly the sum of the eigenvalues of the flow $\phi$ at $p_s$.}
\par
To be more precise, let $\lambda_1>0>\lambda_2$ be the two eigenvalues at the saddle point. In the non-positive divergence case when $\lambda_1\leq-\lambda_2$, \cite{SV} shows that the Dirac deltas at the singularities are the only ergodic invariant probability measures. Moreover \cite{SV} establishes that the Dirac delta at the saddle point is the physical measure for the flow. 
\par
On the other hand in positive divergence case, $\lambda_1>-\lambda_2$, in addition to the Dirac deltas at the singularities, there exists another ergodic invariant probability measure $\nu$, which is supported on the quasi-minimal set of the flow and it is different from the Dirac delta at the saddle point. Under additional assumptions of strictly positive divergence, $\lambda_1>-2\lambda_2$, and of the rotation number of bounded type, they prove that $\nu$ is physical and they put forward a conjecture that this holds for any $\lambda_1>-\lambda_2$. 
\par
In this paper we present some new results in this direction. \\

\par

If the divergence at the saddle point is positive, under the hypothesis that the rotation number is of bounded type, we have: 
\begin{thm}\label{M1}
Let $\phi$ be a Cherry flow with eigenvalues at the saddle point $\lambda_1>0>\lambda_2$. If $\lambda_1>-\lambda_2$ and $\phi$ has rotation number of bounded type, the ergodic invariant probability measure $\nu$ supported on the quasi-minimal set is the physical measure for $\phi$ with attraction basin having full Lebesgue measure. 
\end{thm}
If the divergence at the saddle point becomes strictly positive, {\it without any assumption} on the rotation number, we have: 
\begin{thm}\label{M2}
Let $\phi$ be a Cherry flow with eigenvalues at the saddle point $\lambda_1>0>\lambda_2$. If $\lambda_1\geq-3\lambda_2$, the ergodic invariant probability measure $\nu$ supported on the quasi-minimal set is the physical measure for $\phi$ with attraction basin having full Lebesgue measure. 
\end{thm}
Theorem \ref{M1} together with results of \cite{SV} provides a complete description of the physical measures for Cherry flows having rotation number of bounded type (bounded regime). In the unbounded regime the case $1<\frac{\lambda_1}{-\lambda_2}<3$ remains still open. We will comment on technical problems arising in this case in Remark \ref{finalremark}.

\subsection{Standing Assumptions and Notations}

\vspace{0.3cm}
Let $0$ be the discontinuity point of $g$. In order to simplify the notation we shall write:
\begin{itemize}
\item $\underline{i}=f^{i}(0)$,
\item $\underline{i}_R=R_{\rho}^i(0)$.

\end{itemize}
We observe that, because of the properties of $f$, underlined non-positive integers of the type $\underline{-i}$ represent intervals.
\paragraph {Distance between Points.}
We denote by $\left(a,b\right)=\left(b,a\right)$ the  shortest  open interval between $a$ and $b$ regardless of the order of these two points. The length of that interval in the natural metric on the circle will be denoted by $\left|a-b\right|$. Following \cite{5aut}, let us adopt these notational conventions for the distance between the preimages of the first return function $f$:
\begin{itemize}
\item $\left|\underline{-i}\right|$ stands for the length of the interval $\underline{-i}$.
\item Consider a point $x$ and an interval $\underline{-i}$ not containing it. Then the distance from $x$ to the closest endpoint of $\underline{-i}$ will be denoted by $\left|\left(x,\underline{-i}\right)\right|$, and the distance to the most distant endpoint by $\left|\left(x,\underline{-i}\right]\right|$.
\item We define the distance between the endpoints of two intervals $\underline{-i}$ and $\underline{-j}$ analogously. For example, $\left|\left(\underline{-i},\underline{-j}\right)\right|$ denotes the distance between the closest endpoints of these two intervals while $\left|\left[\underline{-i},\underline{-j}\right)\right|$ stands for $\left|\underline{-i}\right|+\left|\left(\underline{-i},\underline{-j}\right)\right|$.
\end{itemize}

\section{Proof of Theorem \ref{M1}}
We consider the sequence
\begin{displaymath}
\alpha_n=\frac{\left|(\underline{-q_n},\underline{0})\right|}{\left|[\underline{-q_n},\underline{0})\right|}
\end{displaymath} 
and we prove the following proposition:
\begin{prop}\label{amo}
Let $\lambda_1>0>\lambda_2$ be the eigenvalues of the saddle point of $\phi$ and let $f$ be the first return function of the reversed flow $\varphi$. If $f$ has rotation number of bounded type and $\frac{\lambda_1}{-\lambda_2}\in(1,2]$, then there exist two constants $K>0$ and $C<1$ such that for $n$ big enough, $\frac{-\log\alpha_n}{q_{n+1}}\leq K C^n$. This constant $C$ doesn't depend  on the eigenvalues $\lambda_1$ and $\lambda_2$ of the saddle point.
\end{prop}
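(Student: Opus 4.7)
The plan is to derive an inductive one-step contraction for the ratio $1-\alpha_n$ and then iterate. First I observe that
\[
1-\alpha_n=\frac{|\underline{-q_n}|}{|[\underline{-q_n},\underline{0})|},
\]
so (as long as $\alpha_n$ stays bounded away from $0$) the claim $-\log\alpha_n\leq KC^n q_{n+1}$ reduces to showing that this relative length contracts geometrically in $n$. I would therefore aim at an inequality of the shape
\[
\frac{|\underline{-q_{n+1}}|}{|[\underline{-q_{n+1}},\underline{0})|}\leq C\,\frac{|\underline{-q_n}|}{|[\underline{-q_n},\underline{0})|}
\]
with $C<1$ independent of $\lambda_1,\lambda_2$, and then iterate.

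To obtain the one-step bound, I would transport the configuration near $\underline{-q_n}$ forward by $f^{q_n}$, which by the closest-return property sends $\underline{-q_n}$ to $\underline{0}$ and $\underline{0}$ to $\underline{q_n}$. In bounded type ($a_{i}\leq M$), the orbit segment $\underline{0},\underline{1},\ldots,\underline{q_n}$ enters a neighborhood of the flat interval $U$ only a uniformly bounded number of times per level, so the iterate $f^{q_n}$ factors into smooth pieces (with Koebe-type distortion controlled by Denjoy-style bounds, using that the rotation number is of bounded type) and a bounded number of power-law pieces of the form $x\mapsto x^{\lambda_1/(-\lambda_2)}$ arising from the asymptotic form of $f$ near $a$ and $b$. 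Applying the cross-ratio (or Schwarzian) distortion inequality to this decomposition compares the ratio at level $n+1$ to the one at level $n$, with the smooth part contributing a geometric contraction (from bounded geometry) and the singular passages contributing at most a factor depending on $\beta=\lambda_1/(-\lambda_2)$.

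The key point, and the main obstacle, is to show that the per-step contraction constant $C$ can be chosen strictly below $1$ independently of the eigenvalues. The hypothesis $\beta\in(1,2]$ is exactly what makes this possible: the cross-ratio distortion of $x\mapsto x^\beta$ admits a uniform bound throughout the range $\beta\leq 2$, so although the power introduces some expansion of the ratio $|\underline{-q_n}|/|[\underline{-q_n},\underline{0})|$, it cannot overwhelm the geometric contraction gained from the smooth branches. For $\beta>2$ the power-map distortion blows up in a non-summable way, which is why the argument is restricted to this range and the complementary case is treated separately in Theorem~\ref{M2}.

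Iterating the one-step inequality $n$ times yields $1-\alpha_n\leq C^n(1-\alpha_0)$, hence $-\log\alpha_n\leq K' C^n$ for a constant $K'$ depending on $\lambda_1,\lambda_2$ and $M$. Dividing by $q_{n+1}\geq 1$ gives the stated bound $-\log\alpha_n/q_{n+1}\leq KC^n$; the reason the proposition is phrased with the extra factor $q_{n+1}$ is that it is this scaled form that is needed in the subsequent physical-measure argument, where $q_{n+1}$ plays the role of the number of return times contributing at level $n+1$, and the slack will be absorbed elsewhere.
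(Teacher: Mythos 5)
Your reduction goes wrong at the very first step. You propose to prove the much stronger statement $1-\alpha_n\leq C^n(1-\alpha_0)$, i.e.\ that $\alpha_n\to 1$ geometrically, and then to obtain the proposition by ``dividing by $q_{n+1}\geq 1$''. But in the regime $\ell=\frac{\lambda_1}{-\lambda_2}\in(1,2]$ with rotation number of bounded type the geometry is \emph{degenerate}: one of the main results of \cite{5aut} is that $\alpha_n\to 0$, so $-\log\alpha_n\to+\infty$ and no bound of the form $-\log\alpha_n\leq K'C^n$ can hold. The factor $q_{n+1}$ in the statement is not slack to be discarded at the end; it is exactly what makes the estimate true, since $-\log\alpha_n$ genuinely grows and the content of the proposition is that it grows strictly slower than $q_{n+1}$. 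Correspondingly, your caveat ``as long as $\alpha_n$ stays bounded away from $0$'' assumes away precisely the difficulty: $\alpha_n$ is \emph{not} bounded away from $0$ here (that happens only for $\ell>2$, which is why Theorem \ref{M2} can argue differently). The proposed one-step contraction $\frac{|\underline{-q_{n+1}}|}{|[\underline{-q_{n+1}},\underline{0})|}\leq C\,\frac{|\underline{-q_n}|}{|[\underline{-q_n},\underline{0})|}$ is therefore false, and the cross-ratio/distortion sketch offered in its support is never carried out at a level where it could be checked.

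The paper's actual argument is quite different and requires no new distortion estimates: it quotes the two-term recursive inequality $\alpha_n\geq K_1\alpha_{n-1}^{\frac{1-\ell^{-a_{n+1}}}{\ell-1}}\alpha_{n-2}^{\ell^{-a_n}}$ (Proposition 6 of \cite{5aut}; this is where bounded type and all the distortion work enter), sets $\theta_n=-\log\alpha_n$, and runs an induction proving $\theta_n\leq KC^nq_{n+1}$ with $C=\sup_i\bigl(\frac{1-\ell^{-a_i}}{(\ell-1)a_i}\bigr)^{1/n_0}<1$. The recursion $q_{n+1}=a_{n+1}q_n+q_{n-1}$ is used to absorb the factor $a_{n+1}$ produced by the coefficient $\frac{1-\ell^{-a_{n+1}}}{\ell-1}$, which can exceed $1$ when $\ell$ is close to $1$ --- yet another reason a one-step contraction of $\theta_n$ (or of $1-\alpha_n$) cannot work. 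If you want a self-contained proof, the object to reprove is that recursive inequality, not a contraction that the known asymptotics of $\alpha_n$ rule out.
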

\begin{proof}
We write $\ell=\frac{\lambda_1}{-\lambda_2}$. Before beginning the proof  it is necessary  to recall that:
\begin{enumerate}
\item by the recursive formula (\ref{q_n}) $q_0 = 1$, $q_1 = a_1$ and $q_{n+1}=a_{n+1}q_{n}+q_{n-1}$,
\item by Proposition $6$ in \cite{5aut}, for $n$ big enough $\alpha_n\geq K_1\alpha_{n-1}^{\frac{1-\ell^{-a_{n+1}}}{\ell-1}}\alpha_{n-2}^{\ell^{-a_n}}$ where $K_1$ is a positive constant.
\end{enumerate}
In order to simplify the proof we will work assuming that, for $n$ big enough, 
\begin{equation}\label{senk}
\alpha_n\geq\alpha_{n-1}^{\frac{1-\ell^{-a_{n+1}}}{\ell-1}}\alpha_{n-2}^{\ell^{-a_n}}.
\end{equation}
This hypothesis is not restrictive. Indeed, we observe that for every $n$ we have $q_n\geq\frac{\beta^n}{K_2}$ with $\beta=\frac{1+\sqrt{5}}{2}$ and $K_2$ a positive constant. Then the estimation remains true also in the general case 
as one could to each inequalities below the term $\frac{\log(K_1)}{q_n}$ which trivially satisfies the desired estimation. 
\par
To lighten the notation, we introduce a new sequence $(\theta_n)_{n \in \N}$ defined for all $n$, by $\theta_n:=-\log\alpha_n$.
\par
We fix $n_0\in\N$ such that, for all $n\geq n_0$, (\ref{senk}) is satisfied.
\par
We shall prove the proposition by induction on $n$. We take $C=C(\ell)=\sup_i\left(\frac{1-\ell^{-a_{i}}}{(\ell-1)a_i}\right)^{\frac{1}{n_0}}$ and $K\geq\max\lbrace{\theta_{n_0-2},\theta_{n_0-1}\rbrace}$. 
We observe that, for all $1<\ell\leq2$, we have $C<1$; if we consider $C$ as a function of $\ell$, then, in the interval $(1,2]$, $C(\ell)$ is continuous, decreasing and moreover $\lim_{\ell\to 1}C(\ell)=1$ and $C(2)<1$. 
\par 
We observe that, for any natural number $i\geq 1$
% \begin{equation}\label{sl1}
% \frac{1-\ell^{-a_{i}}}{(\ell-1)a_i}\leq C
% \end{equation}
% and
\begin{equation}\label{sl2}
\ell^{-a_i}\leq \frac{1-\ell^{-a_{i}}}{(\ell-1)a_i}\leq C^{n_0}\leq C.
\end{equation}
\par
We now begin the proof by induction.
\begin{itemize}
\item Let $n_0$ be as above. By (\ref{senk}) and (\ref{sl2}), we have that:
\begin{eqnarray*}
\theta_{n_0}&\leq&\left(\frac{1-\ell^{-a_{n_0+1}}}{\ell-1}\right)\theta_{n_0-1}+\ell^{-a_{n_0}}\theta_{n_0-2}\\
&\leq& C^{n_0}K a_{n_0+1}+ C^{n_0}K\\
&\leq & K C^{n_0}\left(a_{n_0+1}+1\right).
\end{eqnarray*}
By point $(1)$~:
\begin{eqnarray}\label{n=1}
\theta_{n_0}\leq K C^{n_0}q_{n_0+1}. 
\end{eqnarray}
\item We now prove the assertion for $n_0+1$. By (\ref{senk}), (\ref{n=1}) and (\ref{sl2}):
\begin{eqnarray*}
\theta_{n_0+1}&\leq&\left(\frac{1-\ell^{-a_{n_0+2}}}{\ell-1}\right)\theta_{n_0}+\ell^{-a_{n_0+1}}\theta_{n_0-1}
\\
&\leq& K C^{n_0}\left(\frac{1-\ell^{-a_{n_0+2}}}{(\ell-1)a_{n_0+2}}\right)a_{n_0+2}q_{n_0+1}+ K C^{n_0}.
\end{eqnarray*}
And by point $(1)$ and (\ref{sl2})~:
\begin{eqnarray*}
\theta_{n_0+1}&\leq&  K C^{n_0+1}\left(a_{n_0+2}q_{n_0+1}+\frac{C^{n_0}}{C}\right)\\&\leq& K C^{n_0+1}q_{n_0+2}.
\end{eqnarray*}
\item We now assume that the assertion is true for $n-2$ and for $n-1$ and we prove it for $n$. 

By (\ref{senk}) and the inductive hypothesis we have that:
\begin{eqnarray*}
\theta_n&\leq&\left(\frac{1-\ell^{-a_{n+1}}}{\ell-1}\right)\theta_{n-1} +\ell^{-a_n}\theta_{n-2}\\&\leq& K \left(\frac{(1-\ell^{-a_{n+1}})}{(\ell-1)a_{n+1}} C^{n-1} a_{n+1} q_n+C^{n-2}\ell^{-a_n}q_{n-1}\right).
\end{eqnarray*}
Finally, by \ref{sl2} and by point $(1)$
\begin{eqnarray*}
\theta_n&\leq& K C^n\left( a_{n+1} q_n+\frac{\ell^{-a_n}}{C^2}q_{n-1}\right)\\&\leq &K C^n q_{n+1}.
\end{eqnarray*}
\end{itemize}
\par

So, the assertion of the lemma is true for all $n\in\mathbb N$ big enough.
 \end{proof}

As a direct consequence of Proposition \ref{amo} we have the following Corollary:
\begin{cor}\label{coramo}
Let $\lambda_1>0>\lambda_2$ be the eigenvalues of the saddle point of $\phi$ and let $f$ be the first return function of the reversing flow $\varphi$. If $f$ has rotation number of bounded type and $\frac{\lambda_1}{-\lambda_2}\in(1,2]$, then there exist two constants $K>0$ and $C<1$ such that for $n$ big enough, $\frac{-\log|(\underline{q_{n}},\underline{0})|}{q_{n+1}}\leq K C^n$. This constant $C$ doesn't depend  on the eigenvalues $\lambda_1$ and $\lambda_2$ of the saddle point.
\end{cor}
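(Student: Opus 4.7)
The corollary reduces to Proposition \ref{amo} by a geometric comparison of lengths. The main step is the real bound
\[
|(\underline{q_n},\underline{0})| \asymp |(\underline{-q_n},\underline{0})|
\]
up to a multiplicative constant independent of $n$. Combinatorially, the point $\underline{q_n}$ and the interval $\underline{-q_n}$ are dual objects on opposite sides of $\underline{0}$ in the dynamical partition at level $n$: under the semiconjugacy $h$ with $R_{\rho}$ they are collapsed to the points $R_{\rho}^{\pm q_n}(0)$, which lie at the same distance $\|q_n\rho\|$ from the origin. For the actual Cherry map $f$, the comparison is obtained by a bounded distortion argument on the smooth branch of $f^{q_n}$ defined on a neighbourhood of the gap $(\underline{-q_n},\underline{0})$, together with its mirror branch near the orbit point $\underline{q_n}$. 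This is the standard real-bounds technology used in \cite{5aut}, available under the bounded type hypothesis.

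Granted the comparison, I would unravel $\alpha_n = |(\underline{-q_n},\underline{0})|/|[\underline{-q_n},\underline{0})|$ to write
\[
-\log|(\underline{q_n},\underline{0})| \leq -\log|(\underline{-q_n},\underline{0})| + O(1) = -\log\alpha_n - \log|[\underline{-q_n},\underline{0})| + O(1).
\]
Standard real bounds give that under bounded type $|[\underline{-q_n},\underline{0})| \gtrsim q_{n+1}^{-M}$ for some fixed $M$, so that $-\log|[\underline{-q_n},\underline{0})| = O(\log q_{n+1})$. Dividing by $q_{n+1}$ and using the exponential growth $q_{n+1} \geq \beta^{n+1}/K_2$ (with $\beta = (1+\sqrt{5})/2$) recalled in the proof of the proposition, this error is $O(n\beta^{-n})$, which is absorbed into $K C^n$ after possibly enlarging $K$. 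Combining with Proposition \ref{amo} then yields
\[
\frac{-\log|(\underline{q_n},\underline{0})|}{q_{n+1}} \leq K C^n
\]
for $n$ large enough, with the same constant $C$ as in the proposition.

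The main difficulty is the real bound $|(\underline{q_n},\underline{0})| \asymp |(\underline{-q_n},\underline{0})|$. Although combinatorially natural via the semiconjugacy $h$, its rigorous proof requires controlling the distortion of specific smooth branches of $f^{q_n}$ near the orbit of $\underline{0}$, and this is precisely the point at which the bounded type hypothesis enters beyond its role in the proposition (the distortion constants depend only on the bound $M$ on the continued-fraction coefficients). Once this estimate is in place, the passage from $\alpha_n$ to $|(\underline{q_n},\underline{0})|$ is a routine logarithmic manipulation, and in particular the geometric rate $C$ does not need to be modified.
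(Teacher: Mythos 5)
Your plan correctly spots that Corollary \ref{coramo} is not literally about the quantity controlled by Proposition \ref{amo}: the proposition bounds $-\log\alpha_n$, where $\alpha_n$ is a \emph{ratio} built from the preimage interval $\underline{-q_n}$, while the corollary asks for an absolute lower bound on $\left|(\underline{q_{n}},\underline{0})\right|$, the distance of the forward closest return $f^{q_n}(0)$ to $0$. The paper passes over this in silence (the corollary is stated as a ``direct consequence'' with no argument), so isolating the missing bridge is the right instinct. The trouble is that the bridge you propose consists of two estimates that are themselves the hard part, and you prove neither. First, the two-sided comparison $\left|(\underline{q_n},\underline{0})\right|\asymp\left|(\underline{-q_n},\underline{0})\right|$ with $n$-independent constants is very doubtful in the regime $\ell=\frac{\lambda_1}{-\lambda_2}\in(1,2]$: the branch of $f^{q_n}$ carrying $(\underline{-q_n},\underline{0})$ onto $(\underline{0},\underline{q_n})$ applies $f$ at its final step to an interval abutting the flat piece, where $f$ behaves like $x\mapsto|x-a|^{\ell}$, and since by \cite{5aut} the geometry near $U$ degenerates for $\ell\le 2$ (the scalings tend to $0$), one expects the image-side gap to be comparable to an $\ell$-th \emph{power} of the preimage-side data rather than to the preimage-side gap itself; no bounded-distortion argument removes that singular last iterate. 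Second, $\left|[\underline{-q_n},\underline{0})\right|\gtrsim q_{n+1}^{-M}$ is not a citable ``standard real bound'' here: since $\left|[\underline{-q_n},\underline{0})\right|=\left|\underline{-q_n}\right|+\left|(\underline{-q_n},\underline{0})\right|$ and the gap can be super-exponentially small, this is essentially a polynomial lower bound on the length of a $q_n$-th preimage of the flat interval in the degenerate regime, a statement of the same order of difficulty as the corollary. Note also that a lower bound on $\alpha_n$ alone gives only $\alpha_n\ge\left|(\underline{-q_n},\underline{0})\right|$, which is the wrong direction, so without your estimate (ii) nothing at all follows from the proposition.

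If both estimates were granted, the remaining bookkeeping is correct (except that absorbing the $O(n\beta^{-n})$ error may require enlarging $C$ as well as $K$, which is harmless since one can keep $C<1$). But as written the proof reduces the corollary to two asserted, unproved geometric inputs, of which the first is likely false as stated. What the corollary actually needs is only a one-sided inequality of the form $\left|(\underline{q_n},\underline{0})\right|\ge c\,\alpha_{n}^{M}$ for some fixed $c>0$ and $M\ge1$ (fixed powers and multiplicative constants disappear into $K$ and $C$ after taking logarithms and dividing by $q_{n+1}$), or, equivalently, an explicit lower bound on the denominator $\left|[\underline{-q_n},\underline{0})\right|$ together with a one-sided comparison between the image-side and preimage-side gaps. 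That one-sided inequality is the statement that must be supplied, with a precise reference to the relevant estimates of \cite{5aut}; it does not follow from Proposition \ref{amo} alone, and your proposal does not yet establish it.
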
 
We recall the following theorem proved in \cite{5aut}~:
\begin{thm} \label{wp}
Let $\lambda_1>0>\lambda_2$ be the eigenvalues of the saddle point of $\phi$ and let $f$ be the first return function of the reversing flow $\varphi$. If $\lambda_1>-\lambda_2$, then $\cup_{i=0}^{\infty}f^{-i}(U)$ has full Lebesgue measure on $\mathbb S^1$.
\end{thm}
The proof of Theorem \ref{M1} uses the main ideas of the proof of Theorem $3$ in \cite{SV}. 
\begin{proof}[Proof of Theorem \ref{M1}]
\par
By Theorem $2$ in \cite{SV} we know that the flow $\phi$ has an invariant probability measure $\nu$ supported on the quasi-minimal set  which corresponds to the extension of the $f$-invariant  measure $\mu$ (defined by $\mu=h^{\star}(\Leb)$ ). It remains to prove that $\nu$ is a physical measure for $\phi$ and that its basin of attraction has full Lebesgue measure.  
\par
By Theorem \ref{wp}, it is sufficient to prove that the points of the wandering set of $\varphi$ are in the basin of attraction of $\nu$. Since all points of the wandering set pass through the flat interval of $f$, then we have just to prove that any point of $U$ is in the basin of attraction of $\nu$. 
\par
Let $z\in U$, $\underline{n}_g=g^{n-1}(z)$ and $t_n=\tau(\underline{n}_g)$. For all $t>0$ there exists $N\in\mathbb N$ such that $t=t_1+t_2+\dots+t_N+\tilde{t}$ where $0<\tilde{t}\leq t_{N+1}$. Moreover, let $n\in\mathbb N$ such that $q_n\leq N<q_{n+1}$. Since $\tau$ is uniformly bounded below we have that:
\begin{equation}\label{eq:tau}
t\geq C N 
\end{equation}
with $C$ a positive constant.
\par
Let $m_t$ be the probability measure introduced in Definition \ref{cela}).
Since the only invariant probability measures are $\delta_s$, $\delta_r$ and $\nu$ (for more details see Theorem 2 in \cite{SV}) and since $p_r$ is repelling, then the limits for $m_t$ will be of the form
\begin{equation}\label{eq:limmt}
\gamma\delta_s+(1- \gamma)\nu, 
\end{equation}
for some $\gamma\in\left[0,1\right]$. In order to prove that $z$ is in the basin of attraction of $\nu$, which means that $\lim_{t\to\infty}m_t=\nu$, it is necessary to prove that $\gamma=0$.
\par
We fix $0<n_0<n$ and we prove that the trajectory of $z$ under $\phi$ spends the most of the time outside of
\begin{equation*} 
A_{n_0}=\lbrace{\phi_s(w)~: w\in(\underline{q_{n_0}},\underline{q_{n_0+1}}), 0\leq s\leq\tau(w)\rbrace}. 
\end{equation*}
We prove that, choosing correctly $n_0$, the time $t_{A_{n_0}}$  which the trajectory $\phi_s(z)$, $0\leq s\leq t$ spends in $A_{n_0}$ can be done arbitrarily small in comparison to $t$, for all $t$ big enough. 
\par
In order to do this we divide $A_{n_0}$ and we start to estimate the time $t_{B_l}$ spent by the trajectory $\phi_s(z)$, $0\leq s\leq t$ in
\begin{equation*}
B_{l}=\lbrace{\phi_s(w)~: w\in(\underline{q_{l}},\underline{q_{l+2}}), 0\leq s\leq\tau(w)\rbrace}.
\end{equation*}
\par
We observe that, since $f$ is the first return function of the flow obtained reversing the direction of $\phi$, if $h$ is the semi-conjugation between $f$ and the rotation $R_{\rho}$, then 
\begin{equation*} 
h(\underline{n}_g)=h(g^{n-1}(z))=h(f^{-n+1}(z))=h(f^{-n}(0))=\underline{-n}_R.
\end{equation*} 
Then for all $l\in\N$ we have $\underline{q_l}_g\in(\underline{q_{l-1}},\underline{q_{l+1}})$ and $\underline{q_l}\in(\underline{q_{l-1}}_g,\underline{q_{l+1}}_g)$. So we can say that the number of the points $\underline{i}_g$, $1\leq i\leq N$ in $(\underline{q_{l}},\underline{q_{l+2}})$ is equal to the number of the points $\underline{-i}_R$, $1\leq i\leq N$ in $(\underline{q_{l}}_R,\underline{q_{l+2}}_R)$. 
\par
So we estimate the number $N_l$ of the points $\underline{-i}_R$, $1\leq i\leq N$ which are in $(\underline{q_{l}}_R,0)$. Since $|(\underline{q_l}_R,{0})|$ is of the order of $\frac{1}{q_{l+1}}$ and since the rotation is a bijection preserving the distance, we can divide the circle in exactly $q_{l+1}$  disjoints  images of $(\underline{q_l}_R,{0})$ and any image has $N_l$ points  $\underline{-i}_R$, $1\leq i\leq N$.
\par
In conclusion  $$q_{l+1}N_l\leq N$$

and the number of the points $\underline{-i}_R$, $1\leq i\leq N$ which are in $(\underline{q_{l}}_R,\underline{q_{l+2}}_R)$ is less than or equal to $\frac{N}{q_{l+1}}$.
\par
By Fact \ref{lem:taulog}, Equation (\ref{eq:tau}) and Corollary \ref{coramo} we have that:
\begin{eqnarray*}
 \frac{t_{A_{n_0}}}{t}=\frac{1}{t}\sum_{l=n_0}^{n-1}t_{B_l}&\leq &\frac{C_3N}{t}\sum_{l=n_0}^{n-1}\frac{-\log|(\underline{q_{l+2}},\underline{0})|}{q_{l+1}}\\
 &\leq&\frac{C_3 }{C}\sum_{l=n_0}^{n-1}\frac{-\log|(\underline{q_{l+2}},\underline{0})|}{q_{l+1}}\\
 &\leq&\frac{C_3}{C C_4}\sum_{l=n_0}^{n-1}(C_5)^{l+2}.
\end{eqnarray*}
Observe that we are assuming a supplementary hypothesis on the eigenvalues $\lambda_1>0>\lambda_2$ of the saddle point: $\lambda_1\leq-2\lambda_2$. The case $\lambda_1>-2\lambda_2$ is proved in \cite{SV} (see Theorem $3$).
Finally, since $\sum_{l=n_0}^{\infty}(C_5)^l$ is convergent, taking $n_0$ big enough, we can make $\frac{t_{A_{n_0}}}{t}$ as small as we want.  
\par
We observe that we have the same result if in place of $A_{n_0}$ we consider $A_{n_0-c}$ with $c>0$ and $A_{n_0}\Subset A_{n_0-c}$.

\par
 It remains to prove that if $\lim_{n_0\to\infty}\frac{t_{A_{n_0}}}{t}=0$ then $\gamma=0$.
 \par
 We suppose by contradiction that $\gamma>0$ and we recall that there exists a sequence strictly non-decreasing sequence of positive reals $(t_n)_{n \in \N}, \, t_n \vers +\infty$ when $n \to +\infty$, such that: $\lim_{t_n\to\infty}m_{t_n}(z)=\gamma\delta_s+(1- \gamma)\nu$ (see (\ref{eq:limmt})).
 \par

 Let us fix $\epsilon>0$. So there exists $T>0$ such that for all $n \in \N$ for which $t_n>T$ and for any continuous $\alpha:\T\vers\R$
\begin{equation}\label{india}
 \left|\int_{\T}\alpha dm_{t_n}(z)-\int_{\T}\alpha d(\gamma\delta_s+(1- \gamma)\nu)\right|<\epsilon.
\end{equation}
Now let $c>0$ be such that $A_{n_0}\Subset A_{n_0-c}$. Let $\alpha$ be a bump function  with compact support such that, for all $x\in A_{n_0}$, $\alpha(x)=1$ and for all $x\in (A_{n_0-c})^c$, $\alpha(x)=0$. We observe that
\begin{equation*}
\frac{t_{A_{n_0}}}{t_n}\leq\int_{\T}\alpha dm_{t_n}(z)\leq \frac{t_{A_{n_0-c}}}{t_n}
\end{equation*}
and we recall that by hypothesis $\lim_{n_0\to\infty}\frac{t_{A_{n_0}}}{t_n}=\lim_{n_0\to\infty}\frac{t_{A_{n_0-c}}}{t_n}=0$.
\par
Then, for $n$ big enough, we deduce by \eqref{india},
\begin{equation}\label{eq:ro}
 \gamma-\epsilon <\gamma+(1-\gamma)\nu(A_{n_0})-\epsilon<\epsilon
\end{equation}
which is in contradiction with the hypothesis that $\gamma>0$.
\par
So $\gamma=0$ and (for the weak-$\star$ topology) $\lim_{t\to\infty}m_t(z)=\nu$. By Definition \ref{cela1}, $z$ is in the basin of attraction of $\phi$.
\end{proof}

\section{Proof of Theorem \ref{M2}} 
The idea of the proof of this theorem is similar to the proof of Theorem \ref{M1}. The main technical tool is the fact that, under the condition $\lambda_1\geq-3\lambda_2$, without any assumption on the rotation number, the sequence $\frac{|(\underline{0}, \underline{q_{n}})|}{|(\underline{0}, \underline{q_{n-2}})|}$ is bounded away from zero (the second claim of Theorem 1.2 in \cite{my2}).

\begin{proof}[Proof of Theorem \ref{M2}]
By the second claim of Theorem 1.2 in \cite{my2} we can assume that there exist $n_0\in\mathbb N$ and a constant $\alpha\in\left(0,1\right)$ such that $\frac{|(\underline{0}, \underline{q_{n}})|}{|(\underline{0}, \underline{q_{n-2}})|}>\alpha^2$ for $n\geq n_0>0$.  Then, by induction
\begin{equation}\label{eq:ricsuan}
|(\underline{0}, \underline{q_{n}})|>C\alpha^n
\end{equation}
for some $C>0$. 
\par
By Theorem $2$ in \cite{SV}, there exists an invariant probability measure $\nu$ supported on the quasi-minimal set. We prove that the basin of attraction of $\nu$ has full Lebesgue measure; so $\nu$ is a physical measure for $\phi$.
\par
Like in the proof of Theorem \ref{M1} we prove that any point of $U$ is in the basin of attraction of $\nu$.
\par
Let $z\in U$, $\underline{n}_g=g^{n-1}(z)$ and $t_n=\tau(\underline{n}_g)$. For all $t>0$ there exists $N\in\mathbb N$ such that  $t=t_1+t_2+\dots+t_N+\tilde{t}$ where $0<\tilde{t}\leq t_{N+1}$ and there exists $n\in\mathbb N$ such that $q_n\leq N<q_{n+1}$. Since $\tau$ is uniformly bounded below, we have that 
\begin{equation}\label{eq:tau1} 
t\geq C_1N
\end{equation}
 with $C_1 > 0$ a positive constant.
\par
Let $m_t$ be the probability measure as introduced in Definition \ref{cela}). The possible limits for $m_t$ must have the form $\gamma\delta_s+(1- \gamma)\nu$, for some $\gamma\in\left[0,1\right]$. We have to prove that $\gamma$ is zero (for the details see the proof of Theorem \ref{M1}).
\par
We fix $0<n_0<n$ and we prove that the orbit of $z$ under $\phi$ spends most of time outside of
\begin{equation*}
A_{n_0}=\lbrace{\phi_s(w)~: w\in(\underline{q_{n_0}},\underline{q_{n_0+1}}), 0\leq s\leq\tau(w)\rbrace}.
\end{equation*}
\par
The time $t_{A_{n_0}}$  spent in $A_{n_0}$ will be calculated as the sum of the times $t_{B_l}$ spent in small pieces of $A_{n_0}$ of the form
\begin{equation*}
B_{l}=\lbrace{\phi_s(w)~: w\in(\underline{q_{l}},\underline{q_{l+2}}), 0\leq s\leq\tau(w)\rbrace}.
\end{equation*}
For these reasons, it is necessary to estimate the number of the points $\underline{i}_g$, $1\leq i\leq N$ in $(\underline{q_{l}},\underline{q_{l+2}})$ which, like in Theorem \ref{M1}, coincides with the number of the points $\underline{-i}_R$, $1\leq i\leq N$ in $(\underline{q_{l}}_R,\underline{q_{l+2}}_R)$ 
%\footnote{$f$ is the first return function of $\varphi$ obtained by reversing the direction of $\phi$.} 
which is less than or equal to $\frac{N}{q_{l+1}}$.
\par
Finally by  (\ref{eq:ricsuan}), (\ref{eq:tau1}), Fact \ref{lem:taulog} and the fact that $q_{l}\geq\frac{\beta^l}{C_6}$ for $\beta=\frac{1+\sqrt{5}}{2}$ we have that:
\begin{eqnarray*}
 \frac{t_{A_{n_0}}}{t}=\frac{1}{t}\sum_{l=n_0}^{n-1}t_{B_l}&\leq &\frac{C_5N}{t}\sum_{l=n_0}^{n-1}\frac{-\log|(\underline{q_{l+2}},\underline{0})|}{q_{l+1}}\\
 &\leq&\frac{C_5}{C_1}\sum_{l=n_0}^{n-1}\frac{-\log|(\underline{q_{l+2}},\underline{0})|}{q_{l+1}}\\
 &\leq&\frac{C_5C\alpha}{C_1}\sum_{l=n_0}^{n-1}\frac{l}{q_{l+1}}\\
 &\leq& \frac{C_5C\alpha C_6}{C_1}\sum_{l=n_0}^{n-1}\frac{l}{\beta^{l+1}}.
\end{eqnarray*}
\par
In conclusion, taking $n$ big enough, we can make $\frac{t_{A_{n}}}{t}$ as small as we want: then, like in the proof of Theorem \ref{M1}, $\gamma=0$ and $z$ is in the basin of attraction of $\nu$. 
\end{proof}

\begin{rem} \label{finalremark}
The proof of Theorem \ref{M1} hinges on the recursive estimate (\ref{senk}) for $\alpha_n$ (used to demonstrate Proposition \ref{amo}). In the case of rotation number of bounded type such an estimate was found in \cite{5aut} and is sufficient to conduct the proof. The case of unbounded type is more problematic as the estimate, provided by \cite{my2}, is not sufficient any more. To circumvent this problem we assume additionally that  the ratio of the eigenvalues at the saddle point is greater than or equal to $3$, which assures that the sequence $\alpha_n$  is bounded away from zero  (Theorem \ref{M2}). We note that an improved estimate may lead to the complete description of the physical measures for Cherry flows without any assumption on the rotation number.
\end{rem}

\subsection*{Acknowledgments}
I would sincerely thank Prof. J. Graczyk for introducing me to the subject of this paper, his valuable advice and continuous encouragement. I am also very grateful to Prof. R. Saghin and Prof. E. Vargas for their willingness to answer to all my questions. This paper was partially supported by funds allocated to the implementation of the international co-funded project in the years 2014-2018, 3038/7.PR/2014/2, and by the EU grant PCOFUND-GA-2012-600415.

\end{document}